\documentclass[12pt]{amsart}
\usepackage[colorlinks=true,urlcolor=blue, citecolor=red,linkcolor=blue,linktocpage,pdfpagelabels, bookmarksnumbered,bookmarksopen]{hyperref}
\usepackage[hyperpageref]{backref}
\usepackage{amsthm} %for citing inside theorem header
\usepackage{latexsym,amsmath,amssymb}
\usepackage{accents}
\usepackage[colorinlistoftodos,prependcaption,textsize=tiny]{todonotes}
\usepackage{a4wide}
\usepackage{soul}
\usepackage{mathtools} %For \chi with a much lower index (\mychi)
\usepackage{xparse} %For a new definiton of \chi with a slightly lower index
  
\title{A remark on $C^{1,\alpha}$-regularity for differential inequalities in viscosity sense}

\author{Armin Schikorra}
\address[Armin Schikorra]{Department of Mathematics,
University of Pittsburgh,
301 Thackeray Hall,
Pittsburgh, PA 15260, USA}
\email{armin@pitt.edu}

%%%%%%%ToDo's%%%%%%%
%\newcommand{\ToDo}{{\bf \color{red} ToDo} }
%\newcommand{\armin}[1]{{\color{red} #1 }}
%\newcommand{\Armin}[1]{\todo[color=red!30]{{\bf Armin:} #1}}
%\newcommand{\iarmin}[1]{\todo[inline, color=red!30]{{\bf Armin:} #1}}
%
%\newcommand*{\red}{\textcolor{red}}
%
\setlength\parindent{0pt}

\belowdisplayskip=18pt plus 6pt minus 12pt \abovedisplayskip=18pt
plus 6pt minus 12pt
\parskip 8pt plus 1pt

%%%%This makes a double spacing. Use this with 11pt style. If you
%%%%want to use this just insert \dsp after the \begin{document}
%%%%The correct baselinestretch for double spacing is 1.37. However
%%%%you can use different parameter.

%%%%%%LITERY%%%%%%%
\def\eps{\varepsilon}

  % metric balls

\def\N{{\mathbb N}}

%%%%%%%%%%%TWIERDZENIA%%%%%%%%%%%%
\newtheorem{theorem}{Theorem}
\newtheorem{lemma}[theorem]{Lemma}
\newtheorem{corollary}[theorem]{Corollary}
\newtheorem{proposition}[theorem]{Proposition}

%\theoremstyle{definition}

%%%FUNKCJE MATEATYCZNE%%%%%%%%%

\def\osc{\mathop{\rm osc\,}}

\def\dist{{\rm dist\,}}

%%%%%%%%%%%%%%%%%%%%%%%%%%%%%%%%%%%%%%%%%%%%%%%%%%%%

\newcommand{\R}{\mathbb{R}}

\newcommand{\brac}[1]{\left (#1 \right )}

%%%%%%%%%%%%%%CALKI%%%%%%%%%%%%%%%%%%%%%%%%%%%%%%%%%
\newcommand{\barint}{
\rule[.036in]{.12in}{.009in}\kern-.16in \displaystyle\int }

\newcommand{\barcal}{\mbox{$ \rule[.036in]{.11in}{.007in}\kern-.128in\int $}}

%%%%%%%%%%%%%%%%%%%%%%%%%%%%%%%%%%%%%%%%%%%%%%%%%%%%

%%%%%%%%%%%%%%%%%%%%%%%%%%%%%%%%%%%%%%%%%%%%%%%%%%%

\def\mvint_#1{\mathchoice
          {\mathop{\vrule width 6pt height 3 pt depth -2.5pt
                  \kern -8pt \intop}\nolimits_{\kern -3pt #1}}%
%%%% P.S., 01/03/2001
% old definition had ...\nolimits_{#1}}
% \kern -3pt makes nicer distances between the integral sign
% and the domain of integration
%%%%
          {\mathop{\vrule width 5pt height 3 pt depth -2.6pt
                  \kern -6pt \intop}\nolimits_{#1}}%
          {\mathop{\vrule width 5pt height 3 pt depth -2.6pt
                  \kern -6pt \intop}\nolimits_{#1}}%
          {\mathop{\vrule width 5pt height 3 pt depth -2.6pt
                  \kern -6pt \intop}\nolimits_{#1}}}

%%%%%%%%%%%%%%%%%%%%%%%%%%%%%%%%%%%%%%%%%%%%%%%%%%%%%

\numberwithin{theorem}{section} \numberwithin{equation}{section}

\newcommand{\lap}{\Delta }
\newcommand{\aleq}{\precsim}

\renewcommand{\div}{\operatorname{div}}

 %\chi with lower index

\let\latexchi\chi
\makeatletter
\renewcommand\chi{\@ifnextchar_\sub@chi\latexchi}
\newcommand{\sub@chi}[2]{% #1 is _, #2 is the subscript
  \@ifnextchar^{\subsup@chi{#2}}{\latexchi^{}_{#2}}%
}
\newcommand{\subsup@chi}[3]{% #1 is the subscript, #2 is ^, #3 is the superscript
  \latexchi_{#1}^{#3}%
}
\makeatother

\begin{document}
\begin{abstract}
We prove interior $C^{1,\alpha}$-regularity for solutions
\[
 - \Lambda \leq F(D^2 u) \leq \Lambda
\]
where $\Lambda$ is a constant and $F$ is fully nonlinear, 1-homogeneous, uniformly elliptic. 

The proof is based on a reduction to the homogeneous equation $F(D^2u) = 0$ by a blow-up argument -- i.e. just like what is done in the case of viscosity solutions $F(D^2 u) = f$ for $f \in L^\infty$. 

However it was not clear to us that the above inequality implies $F(D^2 u) = f$ for some bounded $f$ (as would be the case for linear equations in distributional sense by approximation). Nor were we able to find the literature on $C^{1,\alpha}$-regularity for viscosity inequalities. So we thought this result might be worth recording.
\end{abstract}

\sloppy

\maketitle
\tableofcontents
\sloppy
\section{Introduction}
It is a classical result in the regularity theory of viscosity solutions that viscosity solutions $u: \Omega \subset \R^n \to \R$ to a large class of fully nonlinear elliptic equation
\begin{equation}\label{eq:FD2uf}
 F(D^2 u) = f \quad \mbox{in $\Omega$}
\end{equation}
actually have H\"older continuous gradient, see e.g. \cite[Theorem 8.3]{CC95}. See Section~\ref{s:ingredients} for the precise definition of $F$ we consider here. 

Let us recall that a viscosity solution to \eqref{eq:FD2uf} is a map $u \in C^0(\Omega)$ such that 
\[
 F(D^2 u) \leq f,\mbox{ and  }F(D^2 u) \geq f
\]
both hold in viscosity sense. 
And $F(D^2 u) \leq f$ holds in viscosity sense if for any $\varphi \in C^2(\R^n)$ such that $\varphi - u$ attains its maximum in some $x_0 \in \Omega$ we have
\[
 F(D^2\varphi(x_0)) \leq f(x_0).
\]
Similarly, $F(D^2 u) \geq f$ holds in viscosity sense if for any $\varphi \in C^2(\R^n)$ such that $\varphi - u$ attains its minimum in some $x_0 \in \Omega$ we have
\[
 F(D^2\varphi(x_0)) \geq f(x_0).
\]
For an introduction to the theory of viscosity solutions we refer e.g. to \cite{CC95,K12,K18}.

In this small note we want to record that the $C^{1,\alpha}$-regularity theory for \emph{equations} $F(D^2 u) = f$ also holds for \emph{differential inequalities}. More precisely we have
\begin{theorem}\label{th:main}
Assume that $u \in C^0(\Omega)$ for some $\beta > 0$ solves in viscosity sense
\begin{equation}\label{eq:upde}
 - \Lambda \leq F(D^2 u) \leq \Lambda \quad \mbox{in $\Omega$},
\end{equation}
where $F$ is a uniformly elliptic operator and $1$-homogeneous (see Section~\ref{s:ingredients}), and $\Lambda < \infty$ is a constant. Then $u \in C^{1,\alpha}(\Omega)$ for some $\alpha < 1$.
\end{theorem}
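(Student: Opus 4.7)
The plan is to prove Theorem~\ref{th:main} by the classical blow-up/iteration scheme of Caffarelli, reducing to the homogeneous equation $F(D^2 h) = 0$ whose interior $C^{1,\alpha'}$-theory is already known (Caffarelli--Cabré \cite{CC95}). The heart of the argument is an approximation lemma plus a scaling step that exploits the $1$-homogeneity to kill the right-hand side.

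\textbf{Scaling.} If $u$ satisfies $-\Lambda \leq F(D^2 u) \leq \Lambda$ in $B_1$, then for $r\in(0,1)$ and $s>0$ the rescaling $v(x) := s^{-1}\bigl(u(rx)-\ell(rx)\bigr)$, with $\ell$ any affine function, satisfies (using $1$-homogeneity and the test-function characterisation)
\[
 -\Lambda\,\tfrac{r^2}{s}\;\leq\; F(D^2 v) \;\leq\; \Lambda\,\tfrac{r^2}{s}
 \quad\text{in $B_1$}.
\]
Choosing $s = r^{1+\alpha}$ for any $\alpha<1$ makes the right-hand side $\Lambda r^{1-\alpha}\to 0$ as $r\to 0$. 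Thus by iterated rescaling we may assume $\Lambda$ as small as we like, at the cost of proving a decay estimate of the form $\inf_{\ell\text{ affine}}\|u-\ell\|_{L^\infty(B_\rho)}\leq \rho^{1+\alpha}\|u\|_{L^\infty(B_1)}$.

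\textbf{Hölder compactness.} Uniform ellipticity together with $F(0)=0$ (a consequence of $1$-homogeneity) implies that $u$ simultaneously satisfies the Pucci extremal inequalities $\mathcal{M}^{-}(D^2u)\leq \Lambda$ and $\mathcal{M}^{+}(D^2u)\geq -\Lambda$ in viscosity sense. The Krylov--Safonov Hölder estimate for Pucci operators (see \cite[Ch.~4]{CC95}) then gives a universal interior $C^{\alpha_0}$-bound. This supplies the compactness needed in the next step.

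\textbf{Approximation lemma.} The central step: for every $\eta>0$ there exists $\varepsilon>0$ such that whenever $u\in C^0(\overline{B_1})$ with $\|u\|_\infty\leq 1$ solves $-\varepsilon\leq F(D^2u)\leq \varepsilon$ in viscosity sense, there is $h\in C^0(\overline{B_{1/2}})$ with $F(D^2h)=0$ in $B_{1/2}$, $h=u$ on $\partial B_{1/2}$, and $\|u-h\|_{L^\infty(B_{1/2})}\leq \eta$. One proves this by contradiction: extract a sequence $u_k$ with $\varepsilon_k\to 0$ and, by Step 2, a uniform limit $u_\infty\in C^0$. The standard doubling-variable/strict-maximum trick shows viscosity inequalities pass to the limit (using the author's test-function convention as stated), so $F(D^2 u_\infty)=0$. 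Existence of the comparison $h$ with the same boundary datum (Perron's method plus comparison for $F(D^2\cdot)=0$) yields the contradiction.

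\textbf{Iteration.} Combining the approximation lemma with $C^{1,\alpha'}$-interior bounds for the homogeneous solution $h$ gives: given $\alpha<\alpha'$ one finds $\rho\in(0,1)$ and a linear $L$ with $\|u-L\|_{L^\infty(B_\rho)}\leq \rho^{1+\alpha}\|u\|_{L^\infty(B_1)}$, provided $\Lambda$ is smaller than the threshold furnished by the approximation lemma. By the scaling step this threshold can always be arranged after finitely many rescalings. Iterating produces a sequence of affine functions $\ell_k$ with $\|u-\ell_k\|_{L^\infty(B_{\rho^k})}\leq C\rho^{k(1+\alpha)}$ and summable jumps $|\ell_{k+1}-\ell_k|$, which is precisely the pointwise $C^{1,\alpha}$ characterisation.

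The main obstacle is the stability statement inside the approximation lemma, i.e.\ verifying that the limit of a sequence of viscosity sub/supersolutions of $-\varepsilon_k\leq F(D^2 u_k)\leq \varepsilon_k$ solves $F(D^2 u_\infty)=0$ under the author's convention; this is essentially the classical argument of perturbing the test function by $\delta|x-x_0|^2$ to turn a maximum into a strict one, but one must be careful that the convention used in the excerpt places the test function \emph{above} (resp.\ below) $u$ in the appropriate way for the extremal inequalities to survive passage to the limit.
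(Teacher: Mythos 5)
Your proposal is substantially the same blow-up/compactness argument as the paper's: rescale to make the right-hand side small, use Krylov--Safonov Hölder compactness and stability of viscosity inequalities under locally uniform convergence to reduce to the homogeneous equation $F(D^2\cdot)=0$, then iterate to obtain a Campanato-type $C^{1,\alpha}$ estimate. The one organizational difference worth noting is how the homogeneous comparison function is produced. Your approximation lemma requires solving the Dirichlet problem $F(D^2 h)=0$ in $B_{1/2}$ with boundary datum $u$ (Perron's method plus a comparison principle) and then showing $h$ is close to $u$. The paper's Proposition 3.1 avoids the Dirichlet problem altogether: in the contradiction argument the sequence $u_k$ converges to a limit $u_\infty$ that itself solves the homogeneous equation, and the desired decay estimate
\[
\inf_{q\in\R^n}\ \osc_{B(\lambda)}\bigl(u_\infty-\langle q,x\rangle\bigr)\ \aleq\ [u_\infty]_{C^{1,\alpha}}\,\lambda^{1+\alpha}
\]
is read off directly from Taylor's theorem applied to $u_\infty$, with no solvability or comparison result needed. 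This is marginally cleaner and dodges any boundary-regularity concerns for the Dirichlet solution. Both routes turn on the same two ingredients (the uniform interior $C^\gamma$ estimate, Lemma~\ref{la:uniformhoelder}, and closedness of viscosity sub/supersolutions under locally uniform limits, Lemma~\ref{la:limit}), and you correctly identify the stability step as the crux and as the place where the convention for test functions must be handled carefully; the paper's Lemma~\ref{la:limit} does this via the perturbation $\tilde\varphi(x)=\varphi(x)-|x-x_0|^4$, which is the strict-maximum trick you allude to.
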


Let us remark that Theorem~\ref{th:main} does not seem to follow (even in the linear case $F(D^2 u) = \lap u$ and even with right-hand side in $f \in L^\infty$) only from considering incremental quotients and using Harnack inequality (as in \cite[\textsection 5.3]{CC95} where the right-hand side is zero). The incremental quotient of $f$ is not uniformly bounded and blows up as $h \to 0$.

The problem that lead us to searching in the literature for Theorem~\ref{th:main} is the following: in \cite{KS18} Khomrutai and the author study a geometric obstacle problem. In this geometric problem one is lead to consider obstacle problems for obstacles $\psi \in C^2$ where the energies is of the form
\[
\int |\nabla u|^2 + u^2 g \quad \mbox{where $u \geq \psi$}.
\]
For $g \geq 0$ and $g \in L^1$ one can show boundedness of $u$. If one has $g$ bounded one obtains H\"older continuity of $u$. In particular, in the latter case one obtains in viscosity sense the following three inequalities.
\[
\begin{split}
\lap u \leq u g \quad & \mbox{in $\Omega$}\\
\lap u = u g \quad & \mbox{in $\{u > \psi\}$}\\
\lap u \geq \lap \psi  \quad &\mbox{in  $\{u = \psi\}$}.
\end{split}
\]
That is, one can find $\Lambda$ such that
\[
 \lap u \leq \Lambda,
\]
and
\[
 \lap u \geq \Lambda,
\]
both hold in viscosity sense, but it is not obvious how to find a priori a function $f$ such that $\lap u = f \in L^\infty$. 
If these inequalities were to hold for distributional solutions one easily gets $C^{1,\alpha}$-regularity, cf. Theorem~\ref{th:weak}. For this linear problem one might hope to use an argument as in \cite{JLM01} for the $p$-Laplacian to show that the inequality is actually true also in a weak sense. 

Another appraoch to prove Theorem~\ref{th:main} might be to appeal to the relation between Viscosity solutions and pointwise strong solutions as in \cite{CCKS96}, and show this to hold for inequalities.

Our choice of proof for Theorem~\ref{th:main} is very similar to the usual arguments used for equations $F(D^2 u) = f \in L^\infty$, namely one uses a blow-up procedure to reduce the regularity theory to the homogeneous solutions. We saw similar arguments appear e.g. in \cite{APR17,LL17, L14, BD15}.

However, while H\"older continuity for solutions of viscosity inequalities are well-established and easily citable, e.g. in \cite{CC95}, we were not able to find in the literature a statement regarding H\"older continuity for the gradient of solutions to such inequalities. The author would have appreciated such a statement recorded somewhere, and thought it might be useful also for others.

Let us also remark that in the weak sense a theorem similar to Theorem~\ref{th:main} holds true -- simply by approximation.
\begin{theorem}\label{th:weak}
Let $A \in \R^{n \times n}$ be a symmetric positive definite matrix, and let $u \in W^{1,2}(\Omega)$, $\Omega \subset \R^n$ open, solve
 \[
  f_1 \leq \div(A \nabla u) \leq f_2 \quad \mbox{in $\Omega$}
 \]
 that is we have for any $\varphi \in C_c^\infty(\Omega)$, $\varphi \geq 0$,
\[
 -\int \langle A \nabla u, \nabla \varphi \rangle \leq \int f_2 \varphi ,
\]
and
\[
 -\int \langle A \nabla u, \nabla \varphi \rangle \geq \int f_1 \varphi .
\]
Then for every Ball $B(2r) \subset \Omega$,
\[
 \|\nabla^2 u\|_{L^p(B(r))} \aleq \|f_1\|_{L^p(B(2r))} +\|f_2\|_{L^p(B(2r))} + \|u\|_{L^2(B(2r))}.
\]
In particular, by Sobolev embedding, if $p > n$ we obtain $C^{1,\alpha}$-regularity estimates for $u$.
\end{theorem}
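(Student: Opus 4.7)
The plan is to reduce the differential inequalities, which a priori are only distributional, to a genuine equation $\divv(A\nabla u)=g$ with $g\in L^p$, and then invoke classical interior Calder\'on--Zygmund estimates for the constant-coefficient uniformly elliptic operator $\divv(A\nabla \cdot)$. I would carry out the reduction by a direct mollification argument; alternatively one could observe that $\divv(A\nabla u)-f_1$ and $f_2-\divv(A\nabla u)$ are nonnegative distributions, hence nonnegative Radon measures by Schwartz's theorem, and both are dominated by the absolutely continuous measure $(f_2-f_1)\,dx$, so $\divv(A\nabla u)\in L^p$ already at the outset; but the mollification route yields the quantitative estimate with the same effort.

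Concretely, fix $B(2r)\subset\Omega$, take a standard nonnegative mollifier $\rho_\eps$ supported in $B(\eps)$ with $\eps$ small compared with $\dist(B(2r),\partial\Omega)$, and set $u_\eps:=u*\rho_\eps$, $f_{i,\eps}:=f_i*\rho_\eps$. The main step is to show that on a slightly shrunken ball the \emph{pointwise} inequalities
\[
 f_{1,\eps}\leq \divv(A\nabla u_\eps)\leq f_{2,\eps}
\]
hold. This follows by testing each of the two distributional inequalities in the hypothesis against $\rho_\eps*\varphi$ for nonnegative $\varphi\in C_c^\infty$; such a convolution is again a nonnegative test function, so the inequality is preserved, and then the smoothness of $u_\eps$ and $f_{i,\eps}$ promotes the resulting inequality of continuous integrals to a pointwise inequality of continuous functions. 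This is the one place where the nonnegativity of the kernel is essential, and is the step I expect to be the principal (if mild) obstacle.

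Once this is in place, the trivial bound $|\divv(A\nabla u_\eps)|\leq |f_{1,\eps}|+|f_{2,\eps}|$ a.e.\ combined with the classical interior $W^{2,p}$ Calder\'on--Zygmund estimate for $\divv(A\nabla \cdot)$ (which, after a linear change of variables diagonalising $A$, is just the Laplacian) gives
\[
 \|\nabla^2 u_\eps\|_{L^p(B(r))}\aleq \|f_{1,\eps}\|_{L^p(B(2r))}+\|f_{2,\eps}\|_{L^p(B(2r))}+\|u_\eps\|_{L^2(B(2r))}
\]
with an $\eps$-uniform constant. Sending $\eps\to 0$, weak compactness in $L^p(B(r))$ and the distributional convergence $u_\eps\to u$ force every weak subsequential limit of $\nabla^2 u_\eps$ to coincide with $\nabla^2 u$ in the sense of distributions, and the claimed estimate follows. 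The $C^{1,\alpha}$-regularity in the case $p>n$ is then just the Morrey embedding.
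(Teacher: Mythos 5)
Your mollification argument is exactly the paper's own proof: test against $\rho_\eps * \varphi$ (still nonnegative) to get the pointwise inequality $f_{1,\eps}\leq \divv(A\nabla u_\eps)\leq f_{2,\eps}$, apply constant-coefficient Calder\'on--Zygmund with $\eps$-uniform constants, and pass to the weak limit. The side remark about nonnegative distributions being Radon measures is a genuine alternative, but since you fall back to the mollification route the two proofs coincide.
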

\begin{proof}
Let $\eta \in C_c^\infty(B(0,1))$, $\eta \equiv 1$ on $B(0,1/2)$, and $0 \leq \eta \leq 1$ on $B(0,1)$ be the usual mollifying kernel and set $\eta_\eps := \eps^{-n} \eta(\cdot/\eps)$. Denote the convolutions with $\eta_\eps$ by $u_\eps := \eta_\eps \ast u$ and $\varphi_\eps := \eta_\eps \ast \varphi$. Moreover we define
\begin{equation}\label{eq:uepspde}
g_\eps  := \div(A\nabla u_\eps)  \in C^\infty(\Omega_{-\eps}).
\end{equation}
Here \[\Omega_{-\eps} := \{x \in \Omega, \dist(x,\partial \Omega) > \eps\}.\]
We have for any $\varphi \in C_c^\infty(\Omega)$, $\varphi \geq 0$,
\[
 \int g_\eps \varphi = -\int \langle A \nabla u_\eps, \nabla \varphi \rangle = -\int \langle A \nabla u, \nabla \varphi_\eps \rangle \leq \int f_2 \ast \eta_\eps\ \varphi.
\]
and likewise 
\[
 \int g_\eps \varphi \geq \int f_1 \ast \eta_\eps\ \varphi.
\]
With the same argument that one uses to prove the fundamental theorem of calculus, namely letting $\varphi$ approximate the dirac-function, we obtain 
\[
 f_1 \ast \eta_\eps \leq g_\eps \leq f_2 \ast \eta_\eps \quad \mbox{pointwise everywhere in $\Omega_{-\eps}$}.
\]
In particular, for $\eps < r$ and $B(2r) \subset \Omega$ we readily obtain for any $p \in (1,\infty)$
\[
 \|g_\eps\|_{L^p(B(r))} \aleq \|f_1\|_{L^p(B(2r))} + \|f_2\|_{L^p(B(2r))}.
\]
Thus, from standard Calderon-Zygmund elliptic theory for the (constant coefficient-) equation \eqref{eq:uepspde} we find
\[
 \|\nabla^2 u_\eps\|_{L^p(B(r))} \aleq \|f_1\|_{L^p(B(2r)} +\|f_2\|_{L^p(B(2r)} + \|u\|_{L^2(B(2r))}
\]
with constants independent of $\eps$.
Since $u_\eps \xrightarrow{\eps \to 0} u$ in $W^{1,2}_{loc}(\Omega)$ we obtain from the boundedness of the $W^{2,p}$-norm of $u_\eps$ that the weak limit $u \in W^{2,p}_{loc}(\Omega)$. Moreover, from weak convergence we have the estimate
\[
 \|\nabla^2 u \|_{L^p(B(r))}\aleq \liminf_{\eps \to 0} \|\nabla^2 u_\eps\|_{L^p(B(r))} \leq \|f_1\|_{L^p(B(2r)} +\|f_2\|_{L^p(B(2r)} + \|u\|_{L^2(B(2r))}
\]
\end{proof}

\section{Ingredients and definitions}\label{s:ingredients}
Denote by $\mathcal{S}^n \subset \R^{n\times n}$ the symmetric matrices and let $F: \R^{n \times n} \to \R$ be a uniformly elliptic operator, that is we shall assume there exists ellipticity constants $0 < \lambda_1 < \lambda_2 < \infty$ such that
\begin{equation}\label{eq:elF}
 \lambda_1 {\rm tr}(N) \leq F(M+N) - F(M) \leq \lambda_2 {\rm tr}(N) \quad \forall M,N \in \mathcal{S}^n, \quad N \geq 0.
\end{equation}
Moreover, we shall assume that $F$ is $1$-homogeneous, i.e. that $F(\sigma N) = \sigma F(N)$.

For solutions $u$ to the homogeneous equations $F(D^2 u) = 0$ we have by e.g. \cite[Corollary 5.7.]{CC95}
\begin{theorem}[$C^{1,\alpha}$ for homogeneous equation]\label{th:homogeneous}
Assume that $F$ is as above,  $\Omega \subset \R^n$ is open and in viscosity sense $u \in C^0(\Omega)$ solves
\[
F(D^2 u) =0 \quad \mbox{in $\Omega$}.
\]
Then $u \in C^{1,\alpha}(\Omega)$ for some $\alpha < 1$.
\end{theorem}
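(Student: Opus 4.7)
The plan is the classical Caffarelli iteration: reduce the equation to the Pucci extremal class to obtain H\"older estimates, prove a one-step affine approximation lemma by compactness, and iterate using $1$-homogeneity of $F$. First, since $F$ is $1$-homogeneous we have $F(0)=0$, and \eqref{eq:elF} applied with $M=0$ gives the pointwise envelope $\mathcal{P}^-(N)\le F(N)\le \mathcal{P}^+(N)$ for every $N\in\mathcal{S}^n$, where $\mathcal{P}^\pm=\mathcal{P}^\pm_{\lambda_1,\lambda_2}$ are the Pucci extremal operators. Hence every viscosity solution of $F(D^2 u)=0$ satisfies both Pucci inequalities in viscosity sense, and the Krylov--Safonov Harnack inequality (cf.\ \cite[Ch.~4]{CC95}) yields a uniform $C^{\alpha_0}_{\mathrm{loc}}$-bound for some $\alpha_0>0$. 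The analogous difference inequality $\mathcal{P}^-(M-N)\le F(M)-F(N)\le \mathcal{P}^+(M-N)$, combined with a standard viscosity touching argument, gives moreover that for any two $F$-harmonic $u,v$ the difference $u-v$ also lies in the Pucci class and is therefore $C^{\alpha_0}$.

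The heart of the proof is the \emph{affine improvement lemma}: there exist $\rho\in(0,1/2)$, $\alpha\in(0,1)$ and $C>0$, depending only on $n,\lambda_1,\lambda_2$, such that for every $F$-harmonic $u$ in $B_1$ with $\|u\|_{L^\infty(B_1)}\le 1$ there is an affine function $\ell$ with $|\ell(0)|+|\nabla\ell|\le C$ and
\[
\sup_{B_\rho}|u-\ell|\le \rho^{1+\alpha}.
\]
I would prove this by contradiction and compactness: negating the statement yields a sequence of $F$-harmonic $u_k$ with $\|u_k\|_\infty\le 1$ admitting no such $\ell$. The uniform $C^{\alpha_0}$-estimate gives, along a subsequence, $u_k\to u_\infty$ uniformly on $B_{3/4}$; the limit $u_\infty$ is again $F$-harmonic by the standard stability of viscosity solutions under uniform convergence.

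The main obstacle is then producing an affine approximation of $u_\infty$ at the origin better than the one assumed unavailable for the $u_k$, since a priori $u_\infty$ is only H\"older continuous. This is where $1$-homogeneity is decisive. For every affine $\ell$, the function $u_\infty-\ell$ is again $F$-harmonic (because $F(D^2(u_\infty-\ell))=F(D^2 u_\infty)=0$), and the rescaling $u\mapsto r^{-(1+\alpha)}u(r\,\cdot\,)$ preserves the equation. Combining these invariances with the Pucci-class H\"older estimate applied to the increments $u_\infty(\cdot+he)-u_\infty(\cdot)$, and bootstrapping along the lines of \cite[\S5.3]{CC95}, produces a linear functional playing the role of $\nabla u_\infty(0)$ and hence the required affine approximation, delivering the contradiction.

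Once the affine improvement lemma is in hand the iteration is routine: the rescaled function $u_1(x):=\rho^{-(1+\alpha)}(u-\ell)(\rho x)$ is again $F$-harmonic in $B_1$ with $\|u_1\|_\infty\le 1$ (once more using that $1$-homogeneity makes the equation invariant under subtraction of affine functions and under the above rescaling), so the lemma applies again. Summing the affine functions produced at each scale yields a single $\ell_\infty$ with $|u(x)-\ell_\infty(x)|\le C|x|^{1+\alpha}$ for small $|x|$. Applying this estimate at every point of $\Omega$ gives $u\in C^{1,\alpha}_{\mathrm{loc}}(\Omega)$.
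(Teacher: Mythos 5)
This statement is not proved in the paper at all: it is quoted as a known result from \cite[Corollary 5.7]{CC95}, whose proof goes through the incremental--quotient bootstrap of \cite[\S 5.3]{CC95}. Your proposal instead attempts a compactness/improvement--of--flatness proof, and that is where it breaks down: the argument is circular. In your contradiction step the functions $u_k$ solve the homogeneous equation $F(D^2u_k)=0$, and the uniform limit $u_\infty$ solves the \emph{same} homogeneous equation, so passing to the limit yields no gain in regularity whatsoever. To contradict the assumed failure of affine approximation for the $u_k$ you must produce an affine $\ell$ with $\sup_{B_\rho}|u_\infty-\ell|\le\tfrac12\rho^{1+\alpha}$ and a universal bound on $\ell$; but a priori $u_\infty$ is only $C^{\alpha_0}$, and the existence of such an expansion with universal constants is exactly the theorem being proved. (Contrast this with Proposition~\ref{pr:decay} of the paper, where the approximating functions solve inhomogeneous inequalities while the limit solves the homogeneous equation, whose $C^{1,\alpha}$ regularity is assumed as a black box in Theorem~\ref{th:main2} --- there the compactness step genuinely upgrades the class.) You acknowledge the obstacle and propose to close it by ``bootstrapping along the lines of \cite[\S 5.3]{CC95}'' on the increments of $u_\infty$; but that bootstrap, carried out for an arbitrary viscosity solution, \emph{is} the proof of \cite[Corollary 5.7]{CC95}. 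If you have it, the whole compactness scaffolding is redundant; if you do not, the affine improvement lemma remains unproved.

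A second, related gap: the assertion that for two viscosity solutions $u,v$ the difference $u-v$ lies in the Pucci class by ``a standard viscosity touching argument'' is not correct as stated. A paraboloid touching $u-v$ from above produces no admissible test function for $u$ or for $v$ separately when both are merely continuous; this step (\cite[Theorem 5.3]{CC95}) requires Jensen's sup-/inf-convolution machinery, or that one of the two functions be punctually second-order differentiable. Since the difference-quotient estimate for the translates $u(\cdot+he)-u(\cdot)$ is the engine of the bootstrap you defer to, this is not a cosmetic omission. To make the argument complete you should either reproduce the incremental-quotient proof of \cite[\S 5.3]{CC95} (H\"older estimate for $u(\cdot+he)-u(\cdot)$ via \cite[Theorem 5.3]{CC95}, followed by the exponent-improving iteration lemma up to Lipschitz and then $C^{1,\alpha}$), or simply cite \cite[Corollary 5.7]{CC95}, as the paper does.
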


Theorem~\ref{th:main} is thus a consequence of the following
\begin{theorem}\label{th:main2}
Let $\alpha \in (0,1]$ and assume that $F$ is a homogeneous, uniformly elliptic operator as above such that every viscosity solution $v \in C^0(\Omega)$ of the homogeneous equation
\[
 F(D^2 v) = 0 \quad \mbox{in $\Omega$}
\]
satisfies $v \in C^{1,\alpha}$. 

Assume that $u \in C^0(\Omega)$ solves in viscosity sense \eqref{eq:upde}. Then $u \in C^{1,\beta}(\Omega)$ for any H\"older exponent $\beta \in (0,\alpha)$.
\end{theorem}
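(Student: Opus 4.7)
The strategy is the standard Caffarelli iterative-flattening scheme, adapted to the inequality setting. First, by the $1$-homogeneity of $F$ (which gives $F(D^2(u/M)) = F(D^2 u)/M$ in viscosity sense) I replace $u$ by $u/M$ with $M := \|u\|_{L^\infty(B_1)} + \Lambda/\delta$ to reduce to the normalized situation $\|u\|_{L^\infty(B_1)} \leq 1$ and $-\delta \leq F(D^2 u) \leq \delta$ in viscosity sense in $B_1$, where $\delta > 0$ is at our disposal.

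The core tool is an approximation lemma: for every $\varepsilon > 0$ there exists $\delta = \delta(\varepsilon,F,n) > 0$ such that whenever $v \in C^0(\overline{B_1})$ satisfies $\|v\|_{L^\infty(B_1)} \leq 1$ together with $-\delta \leq F(D^2 v) \leq \delta$ in viscosity sense in $B_1$, there is a viscosity solution $h$ of $F(D^2 h) = 0$ in $B_{3/4}$ with $\|v-h\|_{L^\infty(B_{3/4})} \leq \varepsilon$. The proof goes by contradiction via compactness and stability of viscosity solutions: if the statement fails, a sequence $v_k$ with $\|v_k\|_\infty \leq 1$ and $-1/k \leq F(D^2 v_k) \leq 1/k$ in $B_1$ is uniformly H\"older continuous on compact subsets (a known feature of viscosity sub/supersolutions of such inequalities, \cite{CC95}), so along a subsequence $v_k \to v_\infty$ locally uniformly, and both one-sided inequalities pass to the limit giving $F(D^2 v_\infty) = 0$ in $B_1$. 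Taking $h$ to be the viscosity solution of $F(D^2 h) = 0$ in $B_{3/4}$ with boundary data $v_k|_{\partial B_{3/4}}$ and using the comparison principle together with uniform convergence of the boundary data yields the contradiction.

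With this lemma, fix $\beta \in (0,\alpha)$ and pick $\rho \in (0,1/4)$ so small that $C_0 \rho^{\alpha-\beta} \leq 1/2$, where $C_0$ is the $C^{1,\alpha}$-constant from Theorem~\ref{th:homogeneous} in $B_{3/4}$; then set $\varepsilon := \rho^{1+\beta}/2$ and let $\delta$ be the corresponding number from the approximation lemma. By induction on $k \geq 0$ I construct affine functions $\ell_k(x) = a_k + b_k \cdot x$ with
\[
 \|u-\ell_k\|_{L^\infty(B_{\rho^k})} \leq \rho^{k(1+\beta)}, \qquad |a_{k+1}-a_k| + \rho^k |b_{k+1}-b_k| \leq C\,\rho^{k(1+\beta)}.
\]
For the inductive step one rescales
\[
 u_k(x) := \rho^{-k(1+\beta)}\bigl(u(\rho^k x) - \ell_k(\rho^k x)\bigr),
\]
which is bounded by $1$ in $B_1$ and, by $1$-homogeneity and the affineness of $\ell_k$, satisfies $|F(D^2 u_k)| \leq \rho^{k(1-\beta)}\delta \leq \delta$ in $B_1$. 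The approximation lemma produces a homogeneous solution $h_k$ with $\|u_k - h_k\|_{L^\infty(B_{3/4})} \leq \varepsilon$; the $C^{1,\alpha}$ estimate applied to $h_k$ gives $\|h_k - p_k\|_{L^\infty(B_\rho)} \leq C_0 \rho^{1+\alpha}$ for $p_k(x) := h_k(0) + \nabla h_k(0)\cdot x$, and combining both estimates with the choices of $\rho$ and $\varepsilon$ yields $\|u_k - p_k\|_{L^\infty(B_\rho)} \leq \rho^{1+\beta}$. Rescaling back produces $\ell_{k+1}$. The $b_k$ form a Cauchy sequence, hence $b_k \to \nabla u(0)$, and the pointwise decay $\|u - \ell_k\|_{L^\infty(B_{\rho^k})} \leq \rho^{k(1+\beta)}$ is the standard characterization of $C^{1,\beta}$-regularity at the origin; carrying out the same estimate at every interior base point with constants uniform in a compact subdomain, and a covering argument, then yields $u \in C^{1,\beta}_{\loc}(\Omega)$.

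The delicate step is the approximation lemma. For equations $F(D^2 v) = f$ with $f \in L^\infty$ small, passing a converging sequence of solutions to the limit is straightforward; for the \emph{inequality} $-\delta \leq F(D^2 v) \leq \delta$, however, neither sub- nor super-solution status alone passes to a limit satisfying $F(D^2 h) = 0$, so one must invoke stability of \emph{both} one-sided viscosity inequalities under locally uniform convergence and combine them in the limit. Once that compactness step is in place, the rest of the scheme is essentially identical to the classical iterative-flattening proof of $C^{1,\alpha}$-regularity for $F(D^2 u) = f$.
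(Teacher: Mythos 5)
Your proposal is correct and takes essentially the same route as the paper: both hinge on a compactness/contradiction argument that uses equi-H\"older continuity and stability of one-sided viscosity inequalities under locally uniform convergence to approach a homogeneous solution with the $C^{1,\alpha}$ estimate, followed by the classical iterative flattening (rescale, subtract an affine function, repeat) to deduce $C^{1,\beta}$. The paper packages the compactness step directly as a one-step oscillation-decay proposition rather than as a separate ``closeness to a homogeneous solution'' approximation lemma, but the mathematical content and order of ideas are the same.
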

H\"older regularity of solutions $u$ of differential inequalities in viscosity sense are standard, they follow from Harnack's inequality. See, e.g., \cite[Proposition 4.10]{CC95}.
\begin{lemma}[Uniform H\"older regularity]\label{la:uniformhoelder}
Let $u$ solve \eqref{eq:upde} for $F$ as above. For some $\gamma \in (0,1)$ we have $C^\gamma$-regularity, namely for any ball $B(2r) \subset \Omega$ we have
\[
 [u]_{C^\gamma(B(r))} \leq C(\Lambda,r,\|u\|_{L^\infty(B(2r))})
\]
\end{lemma}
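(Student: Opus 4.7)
The plan is to reduce the viscosity inequality \eqref{eq:upde} to the membership of $u$ in a Pucci class, and then invoke the classical Krylov--Safonov Hölder estimate (as in \cite[Proposition 4.10]{CC95}), which is exactly what the lemma asserts.

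\medskip

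\textbf{Step 1 (pointwise comparison with the Pucci extremal operators).} Since $F$ is $1$-homogeneous we have $F(0)=0$, and the uniform ellipticity \eqref{eq:elF} together with the definition of the Pucci extremal operators $\mathcal{M}^\pm_{\lambda_1,\lambda_2}$ yields
\[
\mathcal{M}^-_{\lambda_1,\lambda_2}(M) \;\leq\; F(M) \;\leq\; \mathcal{M}^+_{\lambda_1,\lambda_2}(M) \qquad \forall\, M \in \mathcal{S}^n.
\]
This is a purely algebraic statement: writing $M = M^+ - M^-$ in its spectral decomposition and using \eqref{eq:elF} inductively on rank-one perturbations by $\pm M^\pm$ delivers the two-sided inequality.

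\medskip

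\textbf{Step 2 (transfer to viscosity inequalities for $\mathcal{M}^\pm$).} If $\varphi \in C^2$ touches $u$ from above at $x_0$, then $F(D^2\varphi(x_0)) \leq \Lambda$ by \eqref{eq:upde}, and by Step~1
\[
\mathcal{M}^-_{\lambda_1,\lambda_2}(D^2\varphi(x_0)) \;\leq\; F(D^2\varphi(x_0)) \;\leq\; \Lambda.
\]
Analogously, if $\varphi$ touches $u$ from below at $x_0$, then $\mathcal{M}^+_{\lambda_1,\lambda_2}(D^2\varphi(x_0)) \geq -\Lambda$. Hence $u$ lies in the Pucci class $\underline{S}(\lambda_1,\lambda_2,\Lambda)\cap\overline{S}(\lambda_1,\lambda_2,-\Lambda)$ in the sense of \cite[Ch.~2]{CC95}.

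\medskip

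\textbf{Step 3 (invoke the classical Hölder estimate for the Pucci class).} The interior Hölder estimate \cite[Proposition 4.10]{CC95} gives
\[
 [u]_{C^\gamma(B(r))} \;\leq\; C\bigl(n,\lambda_1,\lambda_2,r\bigr)\, \bigl(\|u\|_{L^\infty(B(2r))} + \Lambda\,r^{2}\bigr),
\]
with a Hölder exponent $\gamma = \gamma(n,\lambda_1,\lambda_2) \in (0,1)$; absorbing $\Lambda$ and $r$ into the constant yields the stated bound $C(\Lambda,r,\|u\|_{L^\infty(B(2r))})$.

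\medskip

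There is essentially no real obstacle here: the only substantive input is Step~1, but it is a well-known consequence of uniform ellipticity together with $F(0)=0$. Everything else is a direct citation. This is also why the statement is recorded as a lemma rather than proved in detail; the novel content of the paper is the upgrade from $C^\gamma$ to $C^{1,\alpha}$, carried out in Theorem~\ref{th:main2}.
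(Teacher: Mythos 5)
Your proposal is correct and follows the same route as the paper, which simply cites \cite[Proposition 4.10]{CC95}; you usefully make explicit the intermediate reduction to the Pucci class, which hinges on $F(0)=0$ from $1$-homogeneity. One small slip: with the paper's conventions (testing from below for $F(D^2u)\le\Lambda$, from above for $F(D^2u)\ge-\Lambda$) and CC95's definitions, the conclusion should read $u\in\overline{S}(\lambda_1,\lambda_2,\Lambda)\cap\underline{S}(\lambda_1,\lambda_2,-\Lambda)$ rather than the labels you wrote, but this is a relabeling that does not affect the application of the estimate.
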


As a last ingredient we need the (standard) result about limits of uniformly converging viscosity (sub/super)-solutions.
\begin{lemma}\label{la:limit}
Let $\Omega \subset \R^n$ open, $u_k \in C^0(\Omega)$, and $\Lambda_k \in \R$ be a sequence of (viscosity) solutions to
\[
 F(D^2 u_k) \leq \Lambda_k \quad \mbox{in $\Omega$},
\]
or
\[
 F(D^2 u_k) \geq \Lambda_k \quad \mbox{in $\Omega$},
\]
respectively.

Assume that $\Lambda_k \to \Lambda_\infty \in \R$ and $u_k$ converges locally uniformly to $u_\infty$. Then $u_\infty$ is a solution in viscosity sense of
\[
 F(D^2 u_\infty) \leq \Lambda_\infty \quad \mbox{in $\Omega$},
\]
or 
\[
 F(D^2 u_\infty) \geq \Lambda_\infty \quad \mbox{in $\Omega$},
\]
\end{lemma}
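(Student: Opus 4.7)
The result is the classical stability lemma for viscosity sub/supersolutions; I will sketch the subsolution case, the supersolution case being symmetric (alternatively reducing to it via $u \mapsto -u$ together with a sign change in $F$). The plan is to take a test function $\varphi \in C^2(\R^n)$ such that $\varphi - u_\infty$ attains a local maximum at some $x_0 \in \Omega$, and to verify $F(D^2 \varphi(x_0)) \leq \Lambda_\infty$.

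First I would perturb so that the maximum becomes strict: replace $\varphi$ by $\tilde\varphi(x) := \varphi(x) + |x - x_0|^4$. This perturbation vanishes to third order at $x_0$, so $\tilde\varphi(x_0) = \varphi(x_0)$, $D\tilde\varphi(x_0) = D\varphi(x_0)$, and crucially $D^2 \tilde\varphi(x_0) = D^2 \varphi(x_0)$. However $\tilde\varphi - u_\infty$ now has a \emph{strict} local maximum at $x_0$ on some closed ball $\overline{B_r(x_0)} \subset \Omega$.

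Second, I would extract nearby maximum points for the approximating sequence. Let $x_k \in \overline{B_r(x_0)}$ realize the maximum of $\tilde\varphi - u_k$ over that ball. The locally uniform convergence $u_k \to u_\infty$, combined with the strict-maximum property, forces $x_k \to x_0$: indeed any subsequential limit $y \in \overline{B_r(x_0)}$ must be a maximum point of $\tilde\varphi - u_\infty$, and by strictness $y = x_0$. Hence, for $k$ large enough, $x_k$ lies in the open ball $B_r(x_0)$, and so $x_k$ is an interior local maximum of $\tilde\varphi - u_k$. The viscosity subsolution property of $u_k$ then yields
\[
 F(D^2 \tilde\varphi(x_k)) \leq \Lambda_k.
\]

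Finally, I would pass to the limit: continuity of $F$ on $\mathcal{S}^n$ (an immediate consequence of the uniform ellipticity bound \eqref{eq:elF}, which in particular provides Lipschitz control through the associated Pucci extremal operators), together with $D^2 \tilde\varphi(x_k) \to D^2 \tilde\varphi(x_0) = D^2 \varphi(x_0)$ and $\Lambda_k \to \Lambda_\infty$, deliver $F(D^2 \varphi(x_0)) \leq \Lambda_\infty$. The only slightly delicate point is the preservation of a strict maximum under uniform convergence, which I flag above; everything else is routine.
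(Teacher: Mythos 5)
Your overall strategy is the same as the paper's: perturb the test function to make the contact point a strict maximum, pick near-maximizers $x_k$ of $\tilde\varphi - u_k$ on a small closed ball, use the strictness to show $x_k \to x_0$, apply the viscosity inequality for $u_k$ at $x_k$, and pass to the limit using Lipschitz continuity of $F$ coming from uniform ellipticity. This is precisely the paper's argument.

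However, there is a sign error in your perturbation. With the convention adopted here (a test function $\varphi$ for $F(D^2 u)\le\Lambda$ is one for which $\varphi-u$ attains a \emph{maximum} at $x_0$), you must set $\tilde\varphi(x) := \varphi(x) - |x-x_0|^4$, \emph{subtracting} the quartic, not adding it. Subtracting pushes $\tilde\varphi-u_\infty$ strictly down away from $x_0$, turning the (possibly degenerate) maximum into a strict one. With your choice $\tilde\varphi = \varphi + |x-x_0|^4$, the function $\tilde\varphi - u_\infty = (\varphi - u_\infty) + |x-x_0|^4$ does \emph{not} acquire a strict maximum at $x_0$: for instance, if $\varphi-u_\infty$ is constant near $x_0$ (which is allowed), then $\tilde\varphi - u_\infty$ would have a strict \emph{minimum} at $x_0$, and your extraction of maximizers $x_k\to x_0$ breaks down. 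This is the choice the paper makes ($\tilde\varphi(x) = \varphi(x) - |x-x_0|^4$), and with that single sign flipped your argument goes through exactly as written; the rest (extraction of $x_k$, interiority for large $k$, limit passage using ellipticity of $F$) is correct.
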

\begin{proof}
This is of course well known, but we repeat the argument for the $\leq$-case.

Let $u_k \in C^0(\Omega)$ converge locally uniformly to $u_\infty \in C^0(\Omega)$, and assume that 
\begin{equation}\label{eq:pdeuk}
F(D^2 u_k) \leq \Lambda_k
\end{equation}
in viscosity sense, for some constants $\Lambda_k \xrightarrow{k \to \infty} \Lambda$. We will show that then (also in viscosity sense)
\[
F(D^2 u) \leq \Lambda.
\]
So let $\varphi \in C^2(\Omega)$ be a function testfunction for $u$, i.e. assume that $\varphi \leq u$ and $\varphi(x_0) = u(x_0)$. We need to show that 
\begin{equation}\label{eq:gl}
F(D^2 \varphi(x_0)) \leq \Lambda.
\end{equation}
Set
\[
\tilde{\varphi}(x) := \varphi(x) - |x-x_0|^4.
\]
Now we observe that for any $y$ satisfying
\begin{equation}\label{eq:vpyeps}
\tilde{\varphi}(y) - u_k(y) \geq \tilde{\varphi}(x_0) - u_k(x_0)
\end{equation}
we also have
\[
\tilde{\varphi}(y) - u(y) \geq \tilde{\varphi}(x_0) - u(x_0) - 2\|u-u_k\|_{L^\infty}.
\]
Since $u(y) \geq \varphi(y)$ and $\varphi(x_0) = u(x_0)$ we obtain from the definition of $\tilde{\varphi}$,  
\[
- |y-x_0|^4  \geq \varphi(y) - u(y)- |y-x_0|^4  \geq - 2\|u-u_k\|_{L^\infty},
\]
that is any $y$ satisfying \eqref{eq:vpyeps} also satisfies
\[
|y-x_0|^4  \leq 2\|u-u_k\|_{L^\infty} \xrightarrow{k \to \infty} 0.
\]
In particular we can find a sequence $x_k \xrightarrow{k \to \infty} x_0$ such that
\[
\tilde{\varphi}(x_k) - u_k(x_k) = \max_{x} \brac{\tilde{\varphi}(x) - u_k(x)} \geq \tilde{\varphi}(x_0) - u_k(x_0)
\]
That is, $\tilde{\varphi}(x)$ is a testfunction for $u_k$ at $x_k$, and from \eqref{eq:pdeuk} we  get
\[
F(D^2 \tilde{\varphi}(x_k)) \leq \Lambda_k.
\]
From the ellipticity condition \eqref{eq:elF} we also obtain (see \cite[Lemma 2.2]{CC95}) for $M = D^2 \tilde{\varphi}(x_k)$ and $N = D^2 \varphi(x_0)-D^2 \tilde{\varphi}(x_k)$
\[
F(D^2 \varphi(x_0)) \leq F(D^2 \tilde{\varphi}(x_k)) + C(\Lambda)\, |D^2 \tilde{\varphi}(x_k) - D^2 \varphi(x_0)| \leq \Lambda_k + C(\Lambda)\, |D^2 \tilde{\varphi}(x_k) - D^2 \varphi(x_0)|.
\]
But since $x_k \xrightarrow{k \to \infty} x_0$ we have $D^2 \tilde{\varphi}(x_k) \xrightarrow{k \to \infty} D^2 \tilde{\varphi}(x_0) = D^2 \varphi(x_0)$. Thus, we obtain \eqref{eq:gl}.
\end{proof}

\section{Proof of the main theorem}
The heart of the matter is the following decay estimate for the oscillation, we found this kind of argument in \cite[Lemma 3.4]{BD15}.
\begin{proposition}\label{pr:decay}
Let $F$ be as above, and $\alpha$ as in Theorem~\ref{th:main2}. For any $\beta < \alpha$ and any $\lambda_0 \in (0,1)$ there exists $\eps > 0$ and $\lambda \in (0,\lambda_0)$ such that the following holds.

Let $u \in C^0(B(0,1))$ with $\osc_{B(0,1)} u \leq 1$ and
\[
 -\eps \leq F(D^2 u) \leq \eps \quad \mbox{in $B(0,1)$}
\]
Then there exists $q \in \R^n$ such that
\[
 \osc_{B(\lambda)} (u-\langle q, x\rangle_{\R^n}) < \frac{1}{2} \lambda^{1+\beta}. 
\]
\end{proposition}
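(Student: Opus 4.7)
The plan is a standard blow-up/compactness argument, following the strategy sketched for the equation case in \cite{CC95} and the inequality case in \cite{BD15}.

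First, I would use the $C^{1,\alpha}$-regularity for the homogeneous problem (Theorem~\ref{th:homogeneous}) to fix $\lambda$. Specifically, by interior estimates there is a universal constant $C_0=C_0(F,\alpha,n)$ such that any viscosity solution $v$ of $F(D^2 v)=0$ in $B(0,1)$ with $\|v\|_{L^\infty(B(0,1))}\le 1$ is $C^{1,\alpha}$ at the origin with $|v(x)-v(0)-\langle \nabla v(0),x\rangle|\le C_0 |x|^{1+\alpha}$ on $B(0,1/2)$. Since $\beta<\alpha$, I choose $\lambda\in(0,\lambda_0)\cap(0,1/2)$ small enough that $4C_0\,\lambda^{\alpha-\beta}\le 1$. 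This $\lambda$ depends only on $F$, $\alpha$, $\beta$ and $\lambda_0$, not on any individual $u$.

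Next I argue by contradiction: suppose that for this $\lambda$ no admissible $\eps$ exists. Then one finds $\eps_k\to 0$ and $u_k\in C^0(B(0,1))$ with $\osc_{B(0,1)} u_k\le 1$, $-\eps_k\le F(D^2 u_k)\le \eps_k$ in viscosity sense, but
\[
 \osc_{B(\lambda)}(u_k-\langle q,x\rangle)\ge \tfrac12 \lambda^{1+\beta}\quad\text{for every }q\in\R^n.
\]
Subtract the constants $u_k(0)$ to normalize $u_k(0)=0$ (this preserves both the PDE and the inequality we wish to contradict); then $\|u_k\|_{L^\infty(B(0,1))}\le 1$. By Lemma~\ref{la:uniformhoelder} applied with $\Lambda=\eps_k\le 1$, the sequence $\{u_k\}$ is equi-Hölder on each ball $\overline{B(r)}\subset B(0,1)$. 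Arzelà–Ascoli and a diagonal extraction yield a subsequence with $u_k\to u_\infty$ locally uniformly in $B(0,1)$.

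Now Lemma~\ref{la:limit}, applied once to $F(D^2 u_k)\le \eps_k\to 0$ and once to $F(D^2 u_k)\ge -\eps_k\to 0$, gives $F(D^2 u_\infty)=0$ in $B(0,1)$ in viscosity sense. Since $\|u_\infty\|_{L^\infty(B(0,1))}\le 1$, Theorem~\ref{th:homogeneous} together with the first step yields $q_\infty:=\nabla u_\infty(0)$ with
\[
 \osc_{B(\lambda)}(u_\infty-\langle q_\infty,x\rangle)\le 2C_0\,\lambda^{1+\alpha}\le \tfrac14 \lambda^{1+\beta}.
\]
By local uniform convergence, $\osc_{B(\lambda)}(u_k-\langle q_\infty,x\rangle)\to \osc_{B(\lambda)}(u_\infty-\langle q_\infty,x\rangle)$, so for $k$ large enough this oscillation is strictly less than $\tfrac12\lambda^{1+\beta}$, contradicting the choice of the sequence with $q=q_\infty$.

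The only real difficulty is packaging the compactness correctly: one must verify that both the uniform Hölder bound and the stability lemma give universal constants independent of $k$, and that the $C^{1,\alpha}$ constant $C_0$ for the homogeneous limit depends only on $\|u_\infty\|_{L^\infty}$ (not on $u_\infty$ itself), so that $\lambda$ can be chosen \emph{a priori} before the contradiction sequence is constructed. Everything else is routine once those ingredients are in place.
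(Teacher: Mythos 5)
Your proposal is correct and follows the same compactness/blow-up strategy as the paper, but with one genuinely useful difference in the logical packaging: you fix $\lambda$ \emph{a priori} (based on a quantitative interior $C^{1,\alpha}$ estimate for the homogeneous equation) and only then run the contradiction argument in $\eps$. The paper instead negates the full two-quantifier statement $\exists\eps\,\exists\lambda\,\forall u$ and claims to extract a single sequence $u_k$ violating the conclusion simultaneously for \emph{every} $\lambda\in(0,\lambda_0)$ -- which is a stronger statement than the naive negation $\forall\eps\,\forall\lambda\,\exists u$ actually provides -- and then produces the contradiction by sending $\lambda\to 0$ after passing to the limit. Your ordering sidesteps this quantifier subtlety entirely, which is a real improvement in precision.

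Two small remarks. First, you invoke a \emph{universal} constant $C_0$ so that $|v(x)-v(0)-\langle\nabla v(0),x\rangle|\le C_0|x|^{1+\alpha}$ for all homogeneous solutions $v$ with $\|v\|_{L^\infty(B_1)}\le 1$. The hypothesis of Theorem~\ref{th:main2} is only qualitative ($v\in C^{1,\alpha}$), so strictly speaking you are using the quantitative interior estimate (which does hold by \cite[Corollary 5.7]{CC95}, and which the paper effectively also relies on through Theorem~\ref{th:homogeneous}); if you wanted to stay within the purely qualitative hypothesis you would need a short compactness argument to upgrade it, just as the paper implicitly does by working with the fixed but unquantified $[u_\infty]_{C^{1,\alpha}}$. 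Second, a minor arithmetic slip: your choice $4C_0\lambda^{\alpha-\beta}\le 1$ only gives $2C_0\lambda^{1+\alpha}\le\tfrac12\lambda^{1+\beta}$, not $\tfrac14\lambda^{1+\beta}$; you should take $8C_0\lambda^{\alpha-\beta}\le 1$ (or any strict margin) so that the limit oscillation lies strictly below $\tfrac12\lambda^{1+\beta}$ and the uniform-convergence step closes the contradiction. Neither point affects the validity of the approach.
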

\begin{proof}
Assume the claim is false for some fixed $\beta < \alpha$ and $\lambda_0 \in (0,1)$. Then we find for every $k \in \N$ functions $u_k \in C^0(B(0,1))$ with $\osc_{B(0,1)} u_k \leq 1$ solving
\[
 -\frac{1}{k}\leq F(D^2 u_k) \leq \frac{1}{k},
\]
but for every $\lambda \in (0,\lambda_0)$ we have
\[
 \inf_{q^\ast \in \R^n} \osc_{B(\lambda)} (u_k-\langle q^\ast, x\rangle_{\R^n}) \geq \frac{1}{2} \lambda^{1+\beta}. 
\]
Without loss of generality we can assume that $u_k(0) = 0$ (since otherwise $u_k - u_k(0)$ satisfies the same assumptions), and since $\osc_{B(0,1)} u_k \leq 1$ we have $\|u_k\|_{\infty} \leq 1$. By Lemma~\ref{la:uniformhoelder} the $u_k$ are uniformly bounded in $C^\alpha$, for some fixed $\alpha > 0$. By Arzela-Ascoli we thus may assume, up to taking a subsequence, that $u_k \to u_\infty$ locally uniformly in $B(0,1)$.

In view of Lemma~\ref{la:limit} we find that $u_\infty$ solves the homogeneous equation
\[
F(D^2 u_\infty) =0 \quad \mbox{in $B(0,1)$}.
\]
From the assumptions of Theorem~\ref{th:main2} we know that $u_\infty \in C^{1,\alpha}$. From Taylor's theorem we have thus for any $\lambda \in (0,1/4)$,
\[
 \inf_{q^\ast \in \R^n} \osc_{B(\lambda)} (u_\infty-\langle q^\ast, x\rangle_{\R^n}) \aleq [u_\infty]_{C^{1,\alpha}(B(0,1/2))}\, \lambda^{1+\alpha}.
\]
On the other hand, by locally uniform convergence of $u_k$ we have for any $\lambda \in (0,\lambda_0)$.
\[
 \inf_{q^\ast \in \R^n} \osc_{B(\lambda)} (u_\infty-\langle q^\ast, x\rangle_{\R^n}) \geq \frac{1}{2} \lambda^{1+\beta}. 
\]
That is, we have that for all $\lambda \in (0,1/4)$, $\lambda < \lambda_0$.
\[
 \lambda^{\beta-\alpha} \leq [u_\infty]_{C^{1,\alpha}}
\]
Since $\beta < \alpha$ this is impossible for very small $\lambda$.
\end{proof}

Iterating Proposition~\ref{pr:decay} we obtain
\begin{corollary}\label{co:rescaleddecay}
Let $F$ be as above, and $\alpha$ as in Theorem~\ref{th:main2}. For any $\beta < \alpha$ and any $\lambda_0 \in (0,1)$ there exist $\eps > 0$ and $\lambda \in (0,\lambda_0)$ such that the following holds.

Assume $u$ solves
\begin{equation}\label{eq:co1pde}
 -\eps \leq F(D^2 u) \leq \eps \quad \mbox{in $B(0,1)$}
\end{equation}
and
\[
\osc_{B(0,1)} u < 1.
\]
Then for any $k \in \N \cup \{0\}$, there exists $q_k \in \R^n$ such that
\[
\lambda^{-k(1+\beta)}\, \osc_{B(\lambda^k)} \, \brac{u(x) - q_k \cdot x} < 2^{-k}. 
\]
\end{corollary}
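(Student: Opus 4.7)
The plan is a standard inductive rescaling argument: use Proposition~\ref{pr:decay} as the one-step improvement and iterate it by rescaling, where the key technical point is to choose $\lambda$ small enough that both the oscillation bound and the smallness of the right-hand side are preserved under the rescaling (the latter relies on the $1$-homogeneity of $F$).

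First I would induct on $k$. The base case $k=0$ is trivial: take $q_0 = 0$ and note $\osc_{B(1)} u < 1 = 2^0 \lambda^0$. For the inductive step, assume that $q_k \in \R^n$ has been constructed with
\[
 \osc_{B(\lambda^k)} (u - q_k \cdot x) < 2^{-k}\, \lambda^{k(1+\beta)}.
\]
Define the rescaled function
\[
 v(y) := \frac{u(\lambda^k y) - q_k \cdot (\lambda^k y) - c_k}{2^{-k}\, \lambda^{k(1+\beta)}}, \qquad y \in B(0,1),
\]
where $c_k \in \R$ is a centering constant chosen so that $\osc_{B(0,1)} v < 1$. A direct computation gives
\[
 D^2 v(y) = 2^k\, \lambda^{k(1-\beta)}\, D^2 u(\lambda^k y),
\]
so by the $1$-homogeneity of $F$ one has
\[
 |F(D^2 v(y))| = 2^k\, \lambda^{k(1-\beta)}\, |F(D^2 u(\lambda^k y))| \leq 2^k\, \lambda^{k(1-\beta)}\, \eps
\]
in viscosity sense. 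To apply Proposition~\ref{pr:decay} to $v$, we need this bound to be $\leq \eps$ uniformly in $k$, i.e. $(2\lambda^{1-\beta})^k \leq 1$. So when invoking Proposition~\ref{pr:decay} I would additionally shrink $\lambda$ (which is allowed since the conclusion of that proposition is monotone: any smaller $\lambda$ also works up to decreasing $\eps$) to ensure
\[
 2\,\lambda^{1-\beta} \leq 1.
\]
This is the one genuinely delicate choice, and since $\beta < 1$ it is possible.

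With this choice, Proposition~\ref{pr:decay} applied to $v$ yields $q \in \R^n$ with
\[
 \osc_{B(\lambda)} (v - q \cdot y) < \tfrac{1}{2}\, \lambda^{1+\beta}.
\]
Unraveling the rescaling (the constant $c_k$ drops out under $\osc$), if I set
\[
 q_{k+1} := q_k + 2^{-k}\, \lambda^{k\beta}\, q,
\]
then for $x \in B(\lambda^{k+1})$ one gets $u(x) - q_{k+1}\cdot x - c_k = 2^{-k}\lambda^{k(1+\beta)}\bigl(v(x/\lambda^k) - q\cdot(x/\lambda^k)\bigr)$, hence
\[
 \osc_{B(\lambda^{k+1})} (u - q_{k+1}\cdot x) < 2^{-k}\, \lambda^{k(1+\beta)} \cdot \tfrac{1}{2}\, \lambda^{1+\beta} = 2^{-(k+1)}\, \lambda^{(k+1)(1+\beta)},
\]
which closes the induction. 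The main obstacle is really just keeping track of the scaling factors so that the $2^{-k}$ decay of the oscillation in the conclusion is matched exactly by the $2^{-k}$ growth of the rescaled PDE bound and absorbed by the shrinking of $\lambda$; everything else follows mechanically from Proposition~\ref{pr:decay} and $1$-homogeneity.
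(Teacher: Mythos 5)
Your argument is correct and essentially identical to the paper's: define the rescaled functions $u_k$ (the paper calls them $u_k$, you call the $k$-th one $v$), use $1$-homogeneity to track the PDE bound $2^k\lambda^{k(1-\beta)}\eps$, apply Proposition~\ref{pr:decay} at each step, and unravel to update $q_k$; your formula $q_{k+1}=q_k+2^{-k}\lambda^{k\beta}q$ matches the paper's. One small phrasing to tighten: the reason you may assume $2\lambda^{1-\beta}\le 1$ is not that Proposition~\ref{pr:decay} is ``monotone in $\lambda$'' (a smaller $\lambda$ forces a strictly smaller target $\tfrac12\lambda^{1+\beta}$, so that is not automatic), but simply that the proposition holds for \emph{every} $\lambda_0\in(0,1)$, so you may invoke it with a $\lambda_0'$ already obeying $2(\lambda_0')^{1-\beta}<1$ --- which is exactly the paper's ``w.l.o.g.\ $2\lambda_0^{1-\beta}<1$.'' Also, the centering constant $c_k$ is vacuous since $\osc$ is invariant under adding constants, as you note.
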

\begin{proof}
Let $\lambda_0$ w.l.o.g. be such that $2\lambda_0^{1-\beta} < 1$ and let $\lambda \in (0,\lambda_0)$ be from Proposition~\ref{pr:decay}.
For $k \in \N \cup \{0\}$ we set
\[
u_k(x) := 2^{k} \lambda^{-k(1+\beta)}\, \brac{u(\lambda^k x) - q_k \cdot \lambda^k x},
\]
where $q_0 = 0$ and $q_k \in \R^n$, $k \geq 1$, remains to be chosen.

Regardless of the choice of the constant vector $q_k$ we obtain from \eqref{eq:co1pde}, for every $k \in \N \cup \{0\}$,
\[
-2^k \lambda^{k(1-\beta)} \eps   \leq F(D^2 u_k) \leq 2^k \lambda^{k(1-\beta)} \eps \quad \mbox{in $B(0,1)$}.
\]
By the choice of $\lambda_0$ and since $\lambda \in (0,\lambda_0)$ we have in particular for every $k \in \N \cup \{0\}$,
\begin{equation}\label{eq:ukpde}
-\eps \leq F(D^2 u_k) \leq \eps \quad \mbox{in $B(0,1)$}.
\end{equation}
The claim follows, once we show 
\begin{equation}\label{eq:oscukl1}
\osc_{B(0,1)} u_k < 1 \quad \mbox{for all $k \in \N$}.
\end{equation}
We prove \eqref{eq:oscukl1} by induction, for $k = 0$ this holds already by assumption.
Fix $k \in \N$. As induction hypothesis we assume the following holds
\[
\osc_{B(0,1)} u_{k-1} < 1.
\]
In view of \eqref{eq:ukpde} we can apply Proposition~\ref{pr:decay}, and find $\tilde{q_k} \in \R^n$ such that
\[
 2\lambda^{-1-\beta} \osc_{B(\lambda)} \brac{u_{k-1} - \langle \tilde{q}_k, x\rangle_{\R^n}} <1.
\]
That is
\[
 2\lambda^{-1-\beta} \osc_{B(1)} \brac{u_{k-1}(\lambda \cdot) - \langle \lambda \tilde{q}_k, x\rangle_{\R^n}} <1.
\]
By the definition of $u_{k-1}$,
\[
2^{k} \lambda^{-k(1+\beta)}\osc_{B(1)}  \brac{
u(\lambda^k x) - \left \langle q_{k-1} 
 - 2^{1-k}\lambda^{(k-1)(1+\beta)} \lambda^{1-k} \tilde{q}_k,\ \lambda^k x\right \rangle_{\R^n}} <1.
\]
so if we set
\[
q_k := q_{k-1} 
 - 2^{1-k}\lambda^{(k-1)(1+\beta)} \lambda^{1-k} \tilde{q}_k,
\]
we have obtained
\[
\osc_{B(1)}(u_k) < 1.
\]
That is, by induction, \eqref{eq:oscukl1} holds for any $k \in \N \cup 0$.
\end{proof}

\begin{corollary}\label{co:rescaleddecay2}
Let $F$ be as above, and $\alpha$ as in Theorem~\ref{th:main2}. For any $\beta < \alpha$ let $u$ solve for some ball $B(R) \subset \Omega$
\[
-\Lambda \leq F(D^2 u)\leq \Lambda \quad \mbox{in $B(R)$}
\]
Then
\[
\sup_{r < R} r^{-1-\beta}\inf_{q \in \R^n} \osc_{B(r)} (u-\langle q, x\rangle_{\R^n}) \leq C(\beta,\alpha,\Lambda,R,\osc_{B(R)} u).
\]
\end{corollary}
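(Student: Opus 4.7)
The plan is to reduce Corollary~\ref{co:rescaleddecay2} to Corollary~\ref{co:rescaleddecay} by a single rescaling that simultaneously normalizes the oscillation and shrinks the right-hand side of the viscosity inequality to $\eps$, and then to pass from the geometric radii $R\lambda^k$ to arbitrary $r<R$ by a trivial dyadic interpolation.

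Fix $\beta<\alpha$, set $\lambda_0 := 1/2$, and let $\eps>0$ and $\lambda\in(0,\lambda_0)$ be the constants provided by Corollary~\ref{co:rescaleddecay}. Put
\[
 c := \min\brac{\frac{1}{2\,\osc_{B(R)} u},\ \frac{\eps}{R^2\,\Lambda}}, \qquad v(y) := c\,u(Ry),\ y\in B(0,1).
\]
Formally $D^2 v(y) = cR^2\,D^2 u(Ry)$; using the $1$-homogeneity of $F$ together with the test-function characterization (a testfunction $\psi$ for $v$ at $y_0$ corresponds to the testfunction $\varphi(x):=c^{-1}\psi(x/R)$ for $u$ at $Ry_0$) one checks that the viscosity inequality $-\Lambda\leq F(D^2 u)\leq\Lambda$ in $B(R)$ transfers to
\[
 -\eps \leq F(D^2 v)\leq \eps \quad \text{in } B(0,1).
\]
Moreover $\osc_{B(0,1)}v = c\,\osc_{B(R)}u \leq 1/2<1$. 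So Corollary~\ref{co:rescaleddecay} applies to $v$ and produces, for each $k\in\N\cup\{0\}$, a vector $q_k^v\in\R^n$ with
\[
 \lambda^{-k(1+\beta)}\,\osc_{B(\lambda^k)}\brac{v(y)-q_k^v\cdot y} <2^{-k}.
\]

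Undoing the rescaling via $x=Ry$ and $q_k := q_k^v/(cR)$, the identity $v(y)-q_k^v\cdot y = c\,(u(x)-q_k\cdot x)$ yields the discrete bound
\[
 (R\lambda^k)^{-1-\beta}\,\osc_{B(R\lambda^k)}\brac{u-q_k\cdot x} < \frac{2^{-k}\,R^{-1-\beta}}{c}
\]
at the radii $r_k := R\lambda^k$. For an arbitrary $r\in(0,R)$ pick the unique $k\geq 0$ with $r_{k+1}<r\leq r_k$; then $\osc_{B(r)}\leq \osc_{B(r_k)}$ and $r^{-1-\beta}\leq \lambda^{-1-\beta}\,r_k^{-1-\beta}$, so
\[
 r^{-1-\beta}\inf_{q\in\R^n}\osc_{B(r)}\brac{u-q\cdot x} \leq \frac{\lambda^{-1-\beta}}{c\,R^{1+\beta}},
\]
and the right-hand side depends only on $\beta,\alpha,\Lambda,R,\osc_{B(R)}u$ (through $\lambda$, $\eps$ and $c$), which is the desired conclusion.

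There is no real obstacle: the proof is a scaling computation followed by an elementary dyadic interpolation. The only point worth pausing on is the behavior of the viscosity inequality under the map $v(y)=c\,u(Ry)$, which uses the $1$-homogeneity of $F$ essentially, so that the factor $cR^2$ coming from $D^2v = cR^2\,D^2u(R\cdot)$ can be pulled outside of $F$.
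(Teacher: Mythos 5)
Your proposal is correct and takes essentially the same route as the paper: rescale $u$ to a function on $B(0,1)$ whose oscillation is below $1$ and whose viscosity bound is below $\eps$ (using $1$-homogeneity of $F$ for the rescaling), apply Corollary~\ref{co:rescaleddecay}, undo the rescaling, and interpolate dyadically between the radii $R\lambda^k$. Your explicit choice of the normalization constant $c$ and the clean bookkeeping of the scaled $q_k$ are in fact a bit more careful than the paper's own (somewhat terse) version, but the underlying argument is the same.
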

\begin{proof}
By otherwise considering $u_{\kappa,R} := \kappa^{-1} u(R x)$ for
\[
\kappa := \frac{\Lambda}{\eps} + R^2 + \osc_{B(R)} u + 1,
\]
we can assume that $R = 1$, $\Lambda < \eps$ and $\osc_{B(1)} u < 1$. Here $\eps$ is from Corollary~\ref{co:rescaleddecay}.

Denoting for the ball $B(r)$
\[
 \Phi(B(r)) := r^{-1-\beta}\inf_{q \in \R^n} \osc_{B(r)} (u_\infty-\langle q, x\rangle_{\R^n}),
\]
we get from Corollary~\ref{co:rescaleddecay} for any $r \in (\lambda^{k-1},\lambda^{k})$
\[
 \Phi(B(r)) \leq \lambda^{-1-\beta} \Phi(B(\lambda^k)) \leq C(\lambda) 2^{-k} \Phi(B(1)) \leq C(\lambda) r^{\frac{\log 2}{-\log \lambda}}\Phi(B(1)).
\]
This implies for $\sigma := \frac{\log 2}{-\log \lambda} > 0$ 
\[
 \sup_{r < R} r^{-\sigma} \Phi(B(r)) \leq C(\lambda)\, \Phi(B(1)).
\]
Dropping the $\sigma$, the claim is now proven.
\end{proof}

\begin{proof}[Proof of Theorem~\ref{th:main2}]
Let $K \subset \Omega$ be a compact set. By a covering argument for any $\beta < \alpha$ we obtain from Corollary~\ref{co:rescaleddecay2}
\[
\sup_{x \in K, r < \dist(x,\partial \Omega)} r^{-1-\beta}\inf_{q \in \R^n} \osc_{B(r)} (u-\langle q, x\rangle_{\R^n}) < \infty
\]
This readily implies that $u \in C^{1,\beta}(K)$ for any $\beta < \alpha$, see, e.g. \cite{C64}. See also \cite[Theorem 4.4.]{RSS13}.
\end{proof}

\subsection*{Acknowledgment}
Partial support by the Daimler and Benz foundation through grant no. 32-11/16 and Simons foundation through grant no 579261 is gratefully acknowledged.

The author would like to thank Quoc-Hung Nguyen, Cyril Imbert, Erik Lindgren, Qing Liu, Russel Schwab, and Pablo Stinga for helpful suggestions.

\bibliographystyle{abbrv}%
\bibliography{bib}%

\begin{thebibliography}{10}

\bibitem{APR17}
A.~Attouchi, M.~Parviainen, and E.~Ruosteenoja.
\newblock {$C^{1,\alpha}$} regularity for the normalized {$p$}-{P}oisson
  problem.
\newblock {\em J. Math. Pures Appl. (9)}, 108(4):553--591, 2017.

\bibitem{BD15}
I.~Birindelli and F.~Demengel.
\newblock {H}\"{o}lder regularity of the gradient for solutions of fully
  nonlinear equations with sub linear first order term.
\newblock In {\em Geometric methods in {PDE}'s}, volume~13 of {\em Springer
  INdAM Ser.}, pages 257--268. Springer, Cham, 2015.

\bibitem{CC95}
L.~Caffarelli and X.~Cabr\'e.
\newblock {\em Fully nonlinear elliptic equations}, volume~43 of {\em AMS
  Colloquium Publications}.
\newblock AMS, Providence, RI, 1995.

\bibitem{CCKS96}
L.~Caffarelli, M.~G. Crandall, M.~Kocan, and A.~\'{S}wi\c{e}ch.
\newblock On viscosity solutions of fully nonlinear equations with measurable
  ingredients.
\newblock {\em Comm. Pure Appl. Math.}, 49(4):365--397, 1996.

\bibitem{C64}
S.~Campanato.
\newblock Propriet\`a di una famiglia di spazi funzionali.
\newblock {\em Ann. Scuola Norm. Sup. Pisa (3)}, 18:137--160, 1964.

\bibitem{JLM01}
P.~Juutinen, P.~Lindqvist, and J.~J. Manfredi.
\newblock On the equivalence of viscosity solutions and weak solutions for a
  quasi-linear equation.
\newblock {\em SIAM J. Math. Anal.}, 33(3):699--717, 2001.

\bibitem{KS18}
S.~Khomrutai and A.~Schikorra.
\newblock {On $C^{1,\alpha}$-regularity theory for critical points of a
  geometric obstacle-type problem}.
\newblock {\em preprint}, 2018.

\bibitem{K12}
S.~Koike.
\newblock {\em A beginner's guide to the theory of viscosity solutions}.
\newblock www.math.tohoku.ac.jp/$\sim$koike/evis2012version.pdf. 2012.

\bibitem{K18}
N.~Krylov.
\newblock {\em Sobolev and Viscosity Solutions for Fully Nonlinear Elliptic and
  Parabolic Equations}, volume 233 of {\em AMS Mathematical Surveys and
  Monographs}.
\newblock AMS, Providence, RI, 2018.

\bibitem{L14}
E.~Lindgren.
\newblock On the regularity of solutions of the inhomogeneous infinity
  {L}aplace equation.
\newblock {\em Proc. Amer. Math. Soc.}, 142(1):277--288, 2014.

\bibitem{LL17}
E.~Lindgren and P.~Lindqvist.
\newblock Regularity of the {$p$}-{P}oisson equation in the plane.
\newblock {\em J. Anal. Math.}, 132:217--228, 2017.

\bibitem{RSS13}
H.~Rafeiro, N.~Samko, and S.~Samko.
\newblock Morrey-{C}ampanato spaces: an overview.
\newblock In {\em Operator theory, pseudo-differential equations, and
  mathematical physics}, volume 228 of {\em Oper. Theory Adv. Appl.}, pages
  293--323. Birkh\"{a}user/Springer Basel AG, Basel, 2013.

\end{thebibliography}

\end{document}